\documentclass{amsart}


\newtheorem{thm}{Theorem}[section]
\newtheorem{lemma}[thm]{Lemma}

\newtheorem{prop}[thm]{Proposition}

\begin{document}


\title[Chern subrings]{Chern subrings}

\author{Masaki Kameko} 
\address{Faculty of Contemporary Society, Toyama University of International Studies,
 Toyama, Toyama, Japan}
 \email{kameko@tuins.ac.jp}

\author{Nobuaki Yagita}
 \address{Faculty of Education, Ibaraki University,
Mito, Ibaraki, Japan}
\email{yagita@mx.ibaraki.ac.jp}




\begin{abstract}
Let $p$ be an odd prime.
We show that  for  a simply-connected semisimple complex linear algebraic  group, if its integral homology has $p$-torsion,  the Chern classes do not generate the Chow ring of  its classifying space.
\end{abstract}

\maketitle


\section{Introduction}

Let $p$ be an odd prime.
Let $h^{*}(-)$ be   one of  
the mod $p$ cohomology $H\mathbb{Z}/p$, the cohomology  $H\mathbb{Z}_{(p)}$ with coefficient $\mathbb{Z}_{(p)}$ and the Brown-Peterson cohomology $BP$ with $BP_*=\mathbb{Z}_{(p)}[v_1, v_2, \dots ]$. 
Let $G$ be a   compact connected Lie group and $G(\mathbb{C})$ it complexification, that is, $G(\mathbb{C})$ is a complex linear  algebraic group which is homotopy equivalent to the compact  connected Lie group $G$.
Considering a finite dimensional complex representation $\rho:G \to GL_{m}(\mathbb{C})$, we have  Chern classes $c_i(\rho)$ in the cohomology $h^*(BG)$ of classifying space and the Chern subring $Ch_{h}(G)\subset h^{*}(BG)$ , a subalgebra over $h_*$ generated by Chern classes, where $\rho$ ranges over all finite dimensional representations.
If $G$ is one of classical groups $SU(n)$, $\mathrm{Spin}(n)$ and $Sp(n)$,  the cohomology $h^{*}(BG)$ is generated by Chern classes and $h^{*}(BG)=Ch_{h}(G)$ for arbitrary odd prime $p$.

The case of  the Brown-Peterson cohomology  is particularly interesting in conjunction with the study of Chow rings of classifying spaces of complex linear algebraic groups defined by Totaro. In [To], Totaro considered  the classifying space of the linear algebraic group $G(\mathbb{C})$ as a limit of  algebraic varieties, defined the Chow ring for it and showed that the cycle map factors through the Brown-Peterson cohomology, 
\[
CH^{*}(BG(\mathbb{C}))_{(p)} \to BP^{*}(BG)\otimes_{BP_*} \mathbb{Z}_{(p)} \to H^{even}(BG;\mathbb{Z}_{(p)}),
\]
where $H^{even}(BG;\mathbb{Z}_{(p)})$ is the direct sum of $H^{2i}(BG;\mathbb{Z}_{(p)})$ $(i \geq 0)$.
He also conjectured that the left homomorphism $CH^{*}(BG(\mathbb{C}))_{(p)} \to BP^{*}(BG)\otimes_{BP_*} \mathbb{Z}_{(p)}$ is an isomorphism. We may consider a Chern subring  for the Chow ring $CH^*(BG(\mathbb{C}))$ as in the case of the above $Ch_{h}(G)$.

In \cite{Ka-Ya} and \cite{Vi}, the Chow ring $CH^*(BPGL_p(\mathbb{C}))_{(p)}$ of the complex linear algebraic group $PGL_p(\mathbb{C})$, which is the complexification of the projective unitary group $PU(p)$,  and related cohomology theories were computed and  it was shown that
 $$CH^*(BPGL_p(\mathbb{C}))_{(p)}=BP^{*}(BPU(p)) \otimes_{BP_*} \mathbb{Z}_{(p)}=H^{even}(BPU(p);\mathbb{Z}_{(p)})$$ through the cycle map above.
 In \cite[Proposition~5.7]{Ka-Ya}, we showed similar results  for $(p,G)=(3, F_4)$, $(5, E_8)$.
For $p=3$, the computation of the Brown-Peterson cohomology was done by Kono and Yagita in \cite{Ko-Ya} and Kono and  Yagita showed that  $x_8^{a}$ is not in the Chern subring unless $a$ is divisible by $2$. In \cite{Ta}, Targa  showed that $x_{2p+2}^{a}$ in $CH^{*}(BPGL_p(\mathbb{C}))_{(p)}$, where $a\leq p-2$,  is not in the Chern subring for arbitrary odd prime $p$.

In this paper, we prove the following and generalize the above computation of Kono, Yagita and Targa.
Let $Q_i$ be the Milnor operations of degree $2p^i-1$ which acts on the mod $p$ cohomology of a space.


\begin{thm} \label{Theorem:main} 
For $(p,G)=(p, PU(p))$, let $x=Q_0Q_1 x_2$ where $x_2$ is the generator of $H^{2}(BG;\mathbb{Z}/p)=\mathbb{Z}/p$. For $(p,G)=(3, F_4)$, $(3, E_6)$, $(3, E_7)$, $(3, E_8)$, $(5, E_8)$, let $x=Q_1Q_2 x_4$ where $x_4$ is the generator of $H^{4}(BG;\mathbb{Z}/p)=\mathbb{Z}/p$. Then, $x^a$ is not in the Chern subring $Ch_{H\mathbb{Z}/p}(G)$ unless $a$ is  divisible by $p-1$.
\end{thm}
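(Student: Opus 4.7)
The plan is to exploit the inclusion $Ch_{H\mathbb{Z}/p}(G)\subseteq\operatorname{image}(\rho)$, where $\rho\colon BP^*(BG)\to H^*(BG;\mathbb{Z}/p)$ is the Thom reduction, together with the fact that $\operatorname{image}(\rho)$ is annihilated by every Milnor operation $Q_i$. The first inclusion is because every Chern class of a complex representation $\sigma\colon G\to GL_n(\mathbb{C})$ lifts to $BP^*(BG)$ via $BP^*(BU(n))=BP_*[[c_1,\ldots,c_n]]$; the second follows from the classical identification $H^*(BP;\mathbb{Z}/p)\cong\mathcal{A}/\mathcal{A}(Q_0,Q_1,\ldots)$, which means every natural operation $BP^*(-)\to H^*(-;\mathbb{Z}/p)$ is killed by post-composition with any $Q_i$. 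Together these give
\[
Ch_{H\mathbb{Z}/p}(G)\ \subseteq\ \bigcap_{i\ge 0}\ker\bigl(Q_i\colon H^*(BG;\mathbb{Z}/p)\to H^*(BG;\mathbb{Z}/p)\bigr).
\]

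For the case $a=1$, the operations $Q_0,Q_1$ (resp.\ $Q_1,Q_2$) annihilate $x$ automatically by $Q_j^2=0$ and $Q_iQ_j+Q_jQ_i=0$. Passing to the next Milnor operation, anti-commutativity gives
\[
Q_2 x=Q_0Q_1Q_2x_2\quad\text{(resp.\ }Q_3 x=Q_1Q_2Q_3x_4\text{)},
\]
and the explicit presentations of $H^*(BG;\mathbb{Z}/p)$ in \cite{Ka-Ya}, \cite{Vi}, \cite{Ko-Ya} let one verify this element is nonzero; hence $x\notin Ch_{H\mathbb{Z}/p}(G)$.

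For $a\ge 2$, the derivation identity $Q_j(x^a)=a\,x^{a-1}Q_jx$ gives at best divisibility by $p$; moreover $x^{a-1}Q_jx$ may vanish for $a\ge 2$ on nilpotency grounds, so Milnor operations alone cannot produce the finer $(p-1)$-divisibility. The plan at this step is to work at the $BP^*$-level. Using the detailed computations of $BP^*(BG)$ available through \cite{Ka-Ya}, \cite{Ko-Ya} (and their antecedents), I would identify a specific lift $\widetilde{x}\in BP^*(BG)$ of $x$ and establish a relation of the schematic form
\[
\widetilde{x}^{\,p-1}\ \equiv\ (\text{polynomial in integral Chern classes})\pmod{(p)+(v_1,v_2,\ldots)\cdot BP^*(BG)},
\]
and verify that the analogous congruence fails for $\widetilde{x}^a$ whenever $(p-1)\nmid a$. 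Reducing modulo $p$ and applying $\rho$ then transfers the conclusion to $Ch_{H\mathbb{Z}/p}(G)$.

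The hardest part will be this last step: producing and justifying the $(p-1)$-st power congruence and verifying its failure for all other residues $a\bmod(p-1)$. This is a case-by-case computation inside each of the presentations of $BP^*(BG)$ for $PU(p)$, $F_4$, $E_6$, $E_7$, $E_8$, requiring careful tracking of the lifts of Chern classes of the relevant representations (the adjoint for $PU(p)$, and small irreducible representations for the exceptional groups) against the lift $\widetilde{x}$.
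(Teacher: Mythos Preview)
Your approach via the containment $Ch_{H\mathbb{Z}/p}(G) \subseteq \bigcap_i \ker Q_i$ may handle $a=1$, but it cannot reach the full statement. The derivation rule gives $Q_j(x^a) = a\,x^{a-1}Q_jx$, so whenever $p\mid a$ every $Q_j$ kills $x^a$; for $p=3$ this already leaves $a=3,9,15,\dots$ untreated, and for $p=5$ it leaves $a=5,10,15,\dots$, none of which are divisible by $p-1$. Your fallback to $BP^*$ is then self-defeating: the very argument you give for $a=1$ shows $x\notin\operatorname{im}\rho$, so there is no lift $\widetilde{x}\in BP^*(BG)$ with $\rho(\widetilde{x})=x$. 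Even waiving that, ``verify that the analogous congruence fails'' is not a mechanism; to prove an element is \emph{not} in a subring one needs a structural obstruction, not merely the absence of a presentation.

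The paper supplies exactly such an obstruction, of a quite different nature. One restricts along the inclusion $\eta\colon A_n\hookrightarrow G$ of a non-toral elementary abelian $p$-subgroup whose Weyl group in $G$ is $SL_n(\mathbb{Z}/p)$ (with $n=2$ for $PU(p)$ and $n=3$ for the exceptional cases, via $F_4\subset E_6\subset E_7\subset E_8$). Since this Weyl group acts transitively on $V_n\setminus\{0\}$, the total Chern class of any representation of $G$ restricts to a power of $c(\mathrm{reg})$, and hence $B\eta^*\bigl(Ch_{H\mathbb{Z}/p}(G)\bigr)\subset D_n$, the Dickson invariants. On the other hand $B\eta^*(x)=e_n\in SD_n$, and $e_n^a\in D_n$ precisely when $(p-1)\mid a$, since $c_{n,0}=e_n^{\,p-1}$ and $SD_n$ is free over $D_n$ on $1,e_n,\dots,e_n^{p-2}$. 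This delivers the $(p-1)$-divisibility in one stroke, uniformly in $a$ and with no case analysis inside $BP^*(BG)$. The ingredients your plan is missing are the non-toral subgroup and the transitivity lemma forcing restricted Chern classes into $D_n$.
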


This theorem implies that if $x$ comes from the Chow ring through the cycle map, then the Chow ring is not generated by Chern classes. Recall that motivic cohomology $H^{*, *'}(BG(\mathbb{C}), \mathbb{Z}/p)$ contains $CH^{*}(BG(\mathbb{C}))/p$ as $$CH^{*}(BG(\mathbb{C}))/p=H^{2*, *}(BG(\mathbb{C}), \mathbb{Z}/p).$$
Moreover, the motivic cohomology has the action of Milnor operations $Q_i$ where the degree of $Q_i$ is 
$(2p^i-1, p^i-1)$.
If there exists an element $x_{4,3}$ in 
$H^{4,3}(BG(\mathbb{C}), \mathbb{Z}/p)$  corresponding to 
$x_4$ in $H^{4}(BG;\mathbb{Z}/p)$, 
then $x=Q_1Q_2(x_{4,3})$ is in the Chow ring 
$$CH^{p^2+p+1}(BG(\mathbb{C}))/p=H^{2p^2+2p+2,p^2+p+1}(BG(\mathbb{C}),\mathbb{Z}/p)$$ 
and through the cycle map it maps to $x$ in Theorem~\ref{Theorem:main}.
In \cite{Ya}, Lemma 9.6, Yagita proved that if $px_4\in H^{4}(BG;\mathbb{Z}_{(p)})$ is a Chern class of some representation, then the element $x_{4,3}$ above exists. In \cite{Sc-Ya}, Schuster and Yagita  showed that for $(p, G)=(3, F_4)$, 
$3 x_{4}$ is the Chern class of the complexification of the irreducible representation of $F_4$.
In this paper, by computing the Chern class of the adjoint representation of $E_8$, we prove the following proposition.


\begin{prop}\label{Proposition:chern}
For $(p, G)=(3, F_4)$, $(3, E_6)$, $(3, E_7)$, $(3, E_8)$ and $(5, E_8)$,
there exists a complex representation $\alpha$ of $G$ and $\gamma\in \mathbb{Z}_{(p)}^{\times}$ such that the element $\gamma px_4 \in H^{4}(BG;\mathbb{Z}_{(p)})$ is a Chern class $c_2(\alpha)$ \end{prop}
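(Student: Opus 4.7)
\emph{Plan.} The plan is to exhibit, for each listed $G$, the required $\alpha$ as the restriction of the complexified adjoint representation of $E_8$ along the standard inclusion $G\subset E_8$ (and $\alpha=\mathrm{adj}_{E_8}$ itself when $G=E_8$). The main content is to compute $c_2(\mathrm{adj}_{E_8}|_G)\in H^4(BG;\mathbb{Z}_{(p)})$ and verify that its coefficient relative to the generator $x_4$ has $p$-adic valuation exactly $1$.

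\emph{Step 1: the universal class $c_2(\mathrm{adj}_{E_8})$.} Via Borel's injection $H^4(BE_8;\mathbb{Z})\hookrightarrow H^4(BT;\mathbb{Z})^W=\mathrm{Sym}^2(\hat T)^W$, the second Chern class equals the second elementary symmetric polynomial of the weights. Since the roots of $E_8$ sum to zero, $c_2(\mathrm{adj}_{E_8})=-\tfrac12\sum_{\alpha}\alpha^2$, and the standard identity $\sum_\alpha \alpha^2=2h^\vee q$ (with $q$ the $W$-invariant form normalized so long roots have squared length $2$, and $h^\vee(E_8)=30$) yields $c_2(\mathrm{adj}_{E_8})=-30\,q$ inside $H^4(BT)^W$. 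Using that the $E_8$ lattice is even self-dual, the integer generator $x_4\in H^4(BE_8;\mathbb{Z})$ equals $q/2$, so
\[
c_2(\mathrm{adj}_{E_8})\;=\;-60\,x_4^{E_8}.
\]

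\emph{Step 2: restriction to subgroups.} For each $G\in\{F_4,E_6,E_7\}$ I would decompose $\mathrm{adj}_{E_8}$ as a $G$-representation via the standard branching rules
\[
248|_{E_7}=133\oplus 2\!\cdot\!56\oplus \mathbf{1}^{\oplus 3},\qquad 248|_{E_6}=78\oplus 3\!\cdot\!27\oplus 3\!\cdot\!\overline{27}\oplus \mathbf{1}^{\oplus 8},
\]
and, composing with $F_4\subset E_6$ (using $27|_{F_4}=26\oplus\mathbf{1}$), $248|_{F_4}=52\oplus 7\!\cdot\!26\oplus\mathbf{1}^{\oplus 14}$. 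Since $c_1$ vanishes on representations of simple groups, $c_2$ is additive on direct sums, so $c_2(\mathrm{adj}_{E_8}|_G)$ equals the sum of $c_2$'s of the irreducible summands. Computing each summand via its Dynkin index, and cross-checking with the known value $c_2(26_{F_4})=3\,x_4^{F_4}$ from Schuster--Yagita, one sees that the coefficient of $x_4^G$ in $c_2(\mathrm{adj}_{E_8}|_G)$ equals $\pm 60$ for $G=E_6,E_7,E_8$ (the embeddings being Dynkin index $1$) and $\pm 30$ for $G=F_4$.

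\emph{Step 3: conclusion.} Since $v_3(60)=v_3(30)=1$ and $v_5(60)=1$, in each listed case the coefficient $n_G$ of $x_4^G$ in $c_2(\mathrm{adj}_{E_8}|_G)$ satisfies $n_G=p\cdot\gamma$ with $\gamma\in\mathbb{Z}_{(p)}^{\times}$ (concretely, $\gamma=\pm 20$ or $\pm 10$ for $p=3$, and $\gamma=\pm 12$ for $p=5$). Setting $\alpha=\mathrm{adj}_{E_8}|_G$ gives $c_2(\alpha)=\gamma\,p\,x_4$, as required.

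\emph{Main obstacle.} The most delicate point is the precise identification of the integral generator $x_4^G$ inside $H^4(BT;\mathbb{Z})^W$ -- particularly for the non-simply-laced $F_4$, where torsion in $H^*(BF_4;\mathbb{Z})$ makes the inclusion $H^4(BF_4;\mathbb{Z})\hookrightarrow H^4(BT_{F_4};\mathbb{Z})^W$ strict and the right normalization of $x_4^{F_4}$ nontrivial. The sanity check against the known Chern class $c_2(26_{F_4})=3\,x_4^{F_4}$ fixes this normalization, after which the divisibility conclusion follows by elementary arithmetic.
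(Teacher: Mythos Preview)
Your approach and the paper's share the same core idea: use the (complexified) adjoint representation of $E_8$. Where you diverge is in the bookkeeping.

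The paper first reduces everything to $G=E_8$ by invoking that the restriction maps
\[
H^4(BE_8;\mathbb{Z}_{(p)})\to H^4(BE_7;\mathbb{Z}_{(p)})\to H^4(BE_6;\mathbb{Z}_{(p)})\to H^4(BF_4;\mathbb{Z}_{(p)})
\]
are isomorphisms, so it suffices to show $c_2(\mathrm{adj}_{E_8})=\gamma p\,x_4^{E_8}$. It then computes $c_2$ not via the Freudenthal--de Vries identity but by restricting along $T^1\hookrightarrow\mathrm{Spin}(16)\hookrightarrow E_8$ (using $\mathrm{adj}_{E_8}|_{\mathrm{Spin}(16)}=\lambda^2_{16}\oplus\Delta^+_{16}$) and reading off $c_2=-120u^2$ on the circle; since $120=2^3\cdot 3\cdot 5$, the $p$-adic valuation is $1$ for $p=3,5$. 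This explicit restriction avoids all normalization issues with $x_4$ versus $q$.

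Your Step~1 is a legitimate alternative computation of $c_2(\mathrm{adj}_{E_8})$, and your Step~3 is fine. But your Step~2 is both unnecessary and shaky. It is unnecessary because once you know the $H^4$ restriction maps are isomorphisms, pulling back $c_2(\mathrm{adj}_{E_8})$ immediately gives the result for every $G$ in the chain --- no branching needed. It is shaky because your claimed coefficient $\pm 30$ for $F_4$ does not match what naturality forces: if the inclusions induce isomorphisms on $H^4$, the coefficient must be the same $\pm 60$ as for $E_8$; indeed $18+7\cdot 3=39\neq 30$ already signals that the Dynkin-index normalization for non-simply-laced $F_4$ is not what you assumed. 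You flag exactly this as your ``main obstacle,'' and the paper's reduction-first strategy is precisely the device that dissolves it. The conclusion survives your slip only because $v_3(30)=v_3(60)=1$, but the argument as written for $F_4$ is not correct.
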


Thus, we have the following result on Chern subrings of Chow rings.


\begin{thm} \label{Theorem:chow} For $(p, G)=(p, PU(p))$, $(3, F_4)$,  $(3, E_6)$, $(3, E_7)$, $(3,E_8)$ and $(5,E_8)$, 
the Chow ring $CH^{*}(BG(\mathbb{C}))_{(p)}$ is not generated by Chern classes.
\end{thm}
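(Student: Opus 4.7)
The plan is to reduce Theorem~\ref{Theorem:chow} to Theorem~\ref{Theorem:main} by producing, for each listed $(p,G)$, a Chow class $y \in CH^{*}(BG(\mathbb{C}))_{(p)}$ whose cycle-map image modulo $p$ is the class $x$ of Theorem~\ref{Theorem:main}. Because the cycle map is a ring homomorphism that sends algebraic Chern classes to topological Chern classes, such a $y$ cannot lie in the Chern subring of $CH^{*}$: if it did, its image would lie in $Ch_{H\mathbb{Z}/p}(G)$, contradicting Theorem~\ref{Theorem:main}. Thus the Chow ring strictly contains its Chern subring, which is the conclusion of Theorem~\ref{Theorem:chow}.

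For the five exceptional pairs I construct $y$ via motivic Milnor operations. Proposition~\ref{Proposition:chern} produces a representation $\alpha$ with $c_2(\alpha) = \gamma p x_4$ for some unit $\gamma$, which is precisely the hypothesis of \cite[Lemma~9.6]{Ya}, and that lemma yields a motivic lift $x_{4,3} \in H^{4,3}(BG(\mathbb{C}),\mathbb{Z}/p)$ of $x_4$. Since the motivic Milnor operation $Q_i$ has bidegree $(2p^i-1,\, p^i-1)$, one computes
\[
Q_1Q_2(x_{4,3}) \in H^{2p^2 + 2p + 2,\, p^2 + p + 1}(BG(\mathbb{C}), \mathbb{Z}/p),
\]
and this bidegree is of the form $(2n, n)$ with $n = p^2 + p + 1$, so the class lies in $CH^{n}(BG(\mathbb{C}))/p$. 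Setting $y = Q_1Q_2(x_{4,3})$ and using that topological realization intertwines the motivic and topological $Q_i$, the cycle-map image of $y$ is $Q_1Q_2 x_4 = x$, as required. For $(p, PU(p))$ I instead use the isomorphism $CH^{*}(BPGL_p(\mathbb{C}))_{(p)} \cong H^{even}(BPU(p);\mathbb{Z}_{(p)})$ of \cite{Ka-Ya} and \cite{Vi}: the target class $x = Q_0Q_1 x_2 = \beta(Q_1 x_2)$ is a Bockstein image, hence the mod~$p$ reduction of an integral $p$-torsion class in $H^{2p+2}(BPU(p);\mathbb{Z}_{(p)})$, and under the displayed isomorphism this class is the cycle-map image of a unique $y \in CH^{p+1}(BPGL_p(\mathbb{C}))_{(p)}$.

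The argument above is essentially formal once Theorem~\ref{Theorem:main} and Proposition~\ref{Proposition:chern} are in hand, so the substantive obstacle is really Proposition~\ref{Proposition:chern}, which demands a representation-theoretic computation with the adjoint representation of $E_8$ and its restriction to the smaller exceptional groups; as this proposition is stated before Theorem~\ref{Theorem:chow}, it may be assumed here. The only other subtle point is the bidegree bookkeeping: the motivic lift must sit in weight $3$, not in the weight $2$ where $c_2$ natively lives, so that $Q_1Q_2$ lands precisely on the Chow diagonal $H^{2n,n}$ rather than off it. This is exactly why the hypothesis in \cite[Lemma~9.6]{Ya} is that $px_4$, and not $x_4$ itself, must be a Chern class.
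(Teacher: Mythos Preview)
Your proposal is correct and follows essentially the same route as the paper: the argument for Theorem~\ref{Theorem:chow} is laid out in the introduction between Theorem~\ref{Theorem:main} and Theorem~\ref{Theorem:chow}, combining Theorem~\ref{Theorem:main}, Proposition~\ref{Proposition:chern}, and \cite[Lemma~9.6]{Ya} via motivic Milnor operations exactly as you describe, with the $PU(p)$ case handled through the identification of $CH^{*}(BPGL_p(\mathbb{C}))_{(p)}$ with $H^{even}(BPU(p);\mathbb{Z}_{(p)})$ from \cite{Ka-Ya, Vi}.
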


In \S2, we consider Chern classes of elementary abelian $p$-groups. In \S3, we prove Theorem~\ref{Theorem:main}.
In \S4, we prove Proposition~\ref{Proposition:chern}.
We thank Fran\c{c}ois-Xavier Dehon for informing us of the work of Targa.


\section{Chern classes of elementary abelian $p$-groups}

In this section, we investigate the total Chern class of finite dimensional complex representation $\rho:A_n \to GL_m(\mathbb{C})$ of elementary abelian $p$-group $A_n$ of rank $n$.

Firstly, we recall the cohomology of $BA_n$. The mod $p$ cohomology of elementary abelian $p$-group is a polynomial tensor exterior algebra $$\mathbb{Z}/p[t_1, \dots, t_n] \otimes \Lambda( dt_1, \dots, dt_n).$$
The elements $dt_1, \dots, dt_n \in H^1(BA_n;\mathbb{Z}/p)$ correspond to the dual of the basis of $\pi_1(BA_n)=H_1(BA_n;\mathbb{Z}/p)$. The elements $t_1, \dots, t_n$ are obtained from $dt_1, \dots, dt_n$ by applying the Milnor operation $Q_0$. 
For the mod $p$ cohomology of a space, there exists an action of Milnor operations $Q_0, Q_1, Q_2, \dots$ and reduced power operations $\wp^0=1, \wp^1, \wp^2, \dots$. The action of Milnor operations on the mod $p$ cohomology of elementary abelian $p$-group is given by
\[
Q_i(dt_k)=t^{p^i}_{k},\quad
Q_i t_k =0, \quad Q_i(x\cdot y)=Q_i(x) \cdot y + (-1)^{\deg x} x \cdot Q_i(y).
\]
The action of reduced power operations is given by
\[
\wp^{i} dt_k=0,\quad 
\wp^{i} t_k=\begin{cases} t_k^p & (i=1) \\ 0 & (i\geq 2), \end{cases} \quad
\wp^{j}(x \cdot y)=\sum_{i=0}^{j} \wp^{i-j} x \cdot \wp^{j} y.
\]

Secondly, we recall the invariant theory of finite general linear groups and special linear groups. The action of Milnor operations commutes with the action of general linear group $GL_n(\mathbb{Z}/p)$ since the action of the general linear group on the mod $p$ cohomology comes from the one on the elementary abelian $p$-group $A_n$.
For the sake of notational simplicity,  we write $V_n$ for the subspace spanned by $t_1, \dots, t_n$, $$V_n=\mathbb{Z}/p\{ t_1, \dots, t_n\}.$$
We write $SM_n$, $M_n$ for M\`{u}i invariants
\[
H^{*}(BA_n;\mathbb{Z}/p)^{SL_n(\mathbb{Z}/p)}, \quad H^{*}(BA_n;\mathbb{Z}/p)^{GL_n(\mathbb{Z}/p)}, 
\]
respectively.
We also write 
 $SD_n$, $D_n$ for Dickson invariants
 \[
 \mathbb{Z}/p[t_1, \dots, t_n]^{SL_n(\mathbb{Z}/p)}, \quad \mathbb{Z}/p[t_1, \dots, t_n]^{GL_n(\mathbb{Z}/p)}, 
\]
respectively. 
Kameko and Mimura \cite{Ka-Mi} gave a simpler description for $SM_n$, $M_n$ using Milnor operations. For Dickson invariants and M\`{u}i invariants, we refer the reader to \cite{Ka-Mi} and its references.
Let us define $c_{n, i}$ for $n=1, \dots, n-1$ as follows:
Consider the polynomial 
\[
f_n(X)=\displaystyle \prod_{v \in V_n} (X+v)
\]
in $\mathbb{Z}/p[t_1, \dots, t_n][X]$. We define $(-1)^{n-i}c_{n, i}$ to be the coefficient of $X^{p^{n-i}}$ in $f_n(X)$.
We define $e_n$ by  $e_n=Q_0\cdots Q_{n-1} (dt_1\cdots dt_n)$. 
Then, we have the following.
For a ring $R$ and for a finite set $\{a_1, \dots, a_r\}$, we denote by $R\{ a_1, \dots, a_r\}$ a free $R$-module with the basis $\{a_1, \dots, a_r\}$.


\begin{prop}\label{Proposition:e} There hold the following:\\
{\rm (1)} $c_{n,0}=e_n^{p-1}$.\\
{\rm (2)} $f_n(X)=X^{p^n}-c_{n,n-1}X^{p^{n-1}}+\cdots +(-1)^n c_{n,0} X$, \\
{\rm (3)} $SD_n$ is a polynomial algebra $\mathbb{Z}/p[e_n, c_{n,n-1}, \dots, c_{n,1}]$.\\
{\rm (4)} $D_n$ is also a polynomial algebra $\mathbb{Z}/p[c_{n,n-1}, \dots, c_{n,1}, c_{n,0}]$.\\
{\rm (5)} $M_n$ is a free $D_n$-module $$D_n \{ 1, e_n^{p-2} dt_1\dots dt_n, e_{n}^{p-2} Q_{i_1}\dots Q_{i_r} (dt_1\cdots dt_n) \} \quad \text{and}$$
{\rm (6)} $SM_n$ is a free $SD_n$-module  $$SD_n \{ 1, dt_1\dots dt_n, Q_{i_1}\dots Q_{i_r} (dt_1\cdots dt_n) \},$$where $0\leq i_1<\cdots<i_r\leq n-1$, $1\leq r \leq n-1$.
\end{prop}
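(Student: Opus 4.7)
The strategy is to deduce everything from classical Dickson--Mùi invariant theory together with Kameko--Mimura's description of Mùi's generators via iterated Milnor operations. The main technical point is identifying the iterated Milnor operation $e_n$ with a Moore-type determinant and matching it (up to sign) with the top Dickson invariant $c_{n,0}$.

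\emph{Items (2) and (1).} Since $V_n$ is closed under addition, $f_n(X) = \prod_{v \in V_n}(X + v)$ is additive in $X$. In characteristic $p$ every additive polynomial is a $p$-polynomial, so only exponents $p^i$ appear; the constant term vanishes because $0 \in V_n$ and the leading term is $X^{p^n}$, giving (2). In particular the coefficient of $X$ is $\prod_{v \ne 0} v = (-1)^n c_{n,0}$. For (1), I would expand $e_n = Q_0 Q_1 \cdots Q_{n-1}(dt_1 \cdots dt_n)$ using the derivation property of the $Q_i$ together with $Q_i(dt_k) = t_k^{p^i}$: each $Q_i$ must land on a distinct $dt_k$, and summing over permutations produces the Moore determinant $\pm \det(t_j^{p^i})_{0 \le i \le n-1,\, 1 \le j \le n}$. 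The classical Moore identity factors this determinant as a product over the nonzero vectors in $V_n$ with leading nonzero coordinate $1$; grouping the $p^n - 1$ nonzero vectors into $(\mathbb{Z}/p)^\times$-orbits and invoking Wilson's theorem $(p-1)! \equiv -1 \pmod p$ yields $\prod_{v \ne 0} v = (-1)^n e_n^{p-1}$, which matches $(-1)^n c_{n,0}$.

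\emph{Items (3) and (4).} For (4), Dickson's theorem: by (2), each $t_k$ satisfies an integral relation of degree $p^n$ over $\mathbb{Z}/p[c_{n,0}, \ldots, c_{n,n-1}]$, so $\mathbb{Z}/p[t_1, \ldots, t_n]$ is finite of rank at most $p^n$ over this subring; since $\prod_{i=0}^{n-1}(p^n - p^i) = |GL_n(\mathbb{Z}/p)|$ equals $[\mathbb{Z}/p[t_1, \ldots, t_n] : D_n]$, this subring coincides with $D_n$ and the $c_{n,i}$ are algebraically independent. For (3), $e_n$ is $SL_n$-invariant (as a determinant in the entries $t_j^{p^i}$ it transforms by $\det g$ under $g \in GL_n(\mathbb{Z}/p)$) and satisfies $e_n^{p-1} = c_{n,0}$, so $D_n[e_n]/(e_n^{p-1} - c_{n,0}) \subseteq SD_n$; comparing with $[SD_n : D_n] = [GL_n : SL_n] = p - 1$ forces equality.

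\emph{Items (5) and (6).} These are the Kameko--Mimura \cite{Ka-Mi} reformulations of Mùi's theorem. Mùi gave free bases for $M_n$ over $D_n$ and for $SM_n$ over $SD_n$ in terms of determinantal elements $V_{i_1, \ldots, i_r}$; Kameko--Mimura identify these with $Q_{i_1} \cdots Q_{i_r}(dt_1 \cdots dt_n)$ up to nonzero scalars by direct computation using $Q_i(dt_k) = t_k^{p^i}$ and the derivation property. One verifies $SL_n$-invariance (since $dt_1 \cdots dt_n$ is $SL_n$-invariant and the $Q_i$ commute with the $GL_n$-action), and for (5) the factor $e_n^{p-2}$ corrects the $\det g$-twist on $dt_1 \cdots dt_n$ so that the product is $GL_n$-invariant. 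The delicate sign bookkeeping in Step 1, comparing the Milnor-operation expansion of $e_n$ with the Dickson product, is the main obstacle; everything else is classical invariant theory.
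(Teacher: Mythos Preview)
Your proposal is correct and follows exactly the line the paper intends: the paper gives no proof of this proposition at all, merely referring the reader to \cite{Ka-Mi} and its references for Dickson and M\`ui invariants, and your outline reconstructs precisely that classical material (Dickson's theorem, the Moore determinant identity, and Kameko--Mimura's Milnor-operation description of M\`ui's basis). If anything, you have supplied more detail than the paper does.
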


Thirdly, we consider Chern classes. 
 It is well-known that any finite dimensional complex representation of an abelian group is a direct sum of $1$-dimensional complex representations. 
Therefore, the total Chern class $c(\rho)$ is a product of $c(\lambda)$'s where $c(\lambda)=1+v$, $v \in V_n$. Thus, the Chern classes are in $\mathbb{Z}/p[t_1, \dots, t_n]$ instead of $H^{*}(BA_n;\mathbb{Z}/p)$.
Let us consider the total Chern class $c(reg)$ of the regular representation $reg:A_n \to GL_{p^n}(\mathbb{C})$.
It is clear that $GL_n(\mathbb{Z}/p)$ acts on $A_n$ and $c(reg) \in M_n$.

\begin{prop}  \label{Proposition:r} There holds
\[
c(reg)=\prod_{v \in V_n\backslash\{0\}} (1+v)=1-c_{n,n-1}+\dots+(-1)^{n}c_{n,0} \in D_n.
\]
\end{prop}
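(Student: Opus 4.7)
The plan is straightforward: identify the regular representation with the sum over all characters of $A_n$, compute its total Chern class by multiplicativity, and then recognize the resulting product as a specialization of the polynomial $f_n(X)$ defined just before Proposition~\ref{Proposition:e}.

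First, I would recall that since $A_n$ is abelian, every finite dimensional complex representation decomposes into $1$-dimensional characters, and the regular representation $reg$ contains each of the $p^n$ irreducible characters of $A_n$ with multiplicity one. The group of characters $\mathrm{Hom}(A_n,\mathbb{C}^\times)$ is canonically isomorphic to $V_n$ via $\chi \mapsto c_1(\chi)$; under this identification the character corresponding to $v\in V_n$ is the unique $1$-dimensional representation $\lambda_v$ with $c(\lambda_v)=1+v$. In particular $\lambda_0$ is trivial and contributes the factor $1$.

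Next, by the multiplicativity of the total Chern class applied to the decomposition $reg \cong \bigoplus_{v\in V_n} \lambda_v$, I obtain
\[
c(reg) \;=\; \prod_{v\in V_n}(1+v) \;=\; \prod_{v\in V_n\setminus\{0\}}(1+v).
\]
Comparing with the definition
\[
f_n(X)=\prod_{v\in V_n}(X+v)\in \mathbb{Z}/p[t_1,\dots,t_n][X],
\]
this says exactly that $c(reg)=f_n(1)$. Substituting $X=1$ into the expansion provided by Proposition~\ref{Proposition:e}(2), namely
\[
f_n(X)=X^{p^n}-c_{n,n-1}X^{p^{n-1}}+\cdots+(-1)^n c_{n,0}X,
\]
yields the claimed identity $c(reg)=1-c_{n,n-1}+\cdots+(-1)^n c_{n,0}$. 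Finally, membership in $D_n$ is immediate from Proposition~\ref{Proposition:e}(4), since each $c_{n,i}$ is a polynomial generator of $D_n$; alternatively it follows intrinsically because the action of $GL_n(\mathbb{Z}/p)$ on $V_n$ merely permutes the linear factors $1+v$.

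There is really no conceptual obstacle here: the content is that the polynomial $f_n(X)$ was designed precisely so that its value at $X=1$ records the total Chern class of the regular representation. The only point that requires a small check is the canonical identification of $\mathrm{Hom}(A_n,\mathbb{C}^\times)$ with $V_n$ via the first Chern class, so that the product over characters becomes a product over $V_n$; once this is spelled out, the remainder is a direct substitution into the formula of Proposition~\ref{Proposition:e}(2).
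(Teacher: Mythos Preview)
Your argument is correct, and in fact the paper states this proposition without proof, presumably regarding it as immediate from the preceding definitions. Your write-up supplies exactly the routine verification one would expect: decompose $reg$ into one-dimensional characters indexed by $V_n$, use multiplicativity of the total Chern class, and identify the resulting product as $f_n(1)$ via Proposition~\ref{Proposition:e}(2).
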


For a group $W$ acting $V_n\backslash\{0\}$, we say the action of $W$ is transitive on $V_n\backslash\{0\}$ if and only if 
for each $u, v$ in $V_n\backslash\{0\}$, there exists $w \in W$ such that $w u=v$. We investigate the total Chern class $c(\rho)$ when the image of  the induced homomorphism $B\rho^*:H^{*}(BGL_{m}(\mathbb{C});\mathbb{Z}/0) \to \mathbb{Z}/p[t_1, \dots, t_n]$ is invariant under certain group action.


 \begin{lemma} \label{Lemma:image}
 Let $\rho:A_n \to GL_m(\mathbb{C})$ be a complex representation of elementary abelian $p$-group $A_n$ of rank $n$. Suppose that a subgroup $W$ of $GL_n(\mathbb{Z}/p)$ acts on $A_n$ in the obvious manner. Suppose that 
 the total Chern class $c(\rho)$ is in $\mathbb{Z}/p[t_1, \dots, t_n]^{W}$ and suppose that the 
 action of $W$ on $V_n\backslash\{0\}$ is transitive.  Then, $c(\rho)=c(reg)^{a}$ for some $a\geq 0$. \end{lemma}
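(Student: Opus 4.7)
The plan is to exploit that any complex representation of the abelian group $A_n$ splits as a direct sum of one-dimensional characters, so that $c(\rho)$ factors as a product $\prod(1+v_i)$ with each $v_i\in V_n$, and then to recover the multiset of weights $\{v_i\}$ from $c(\rho)$ via unique factorization in the polynomial ring $\mathbb{Z}/p[t_1,\dots,t_n]$.

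First I would decompose $\rho=\bigoplus\chi_i$ into characters, recording the weights $v_i\in V_n$; trivial characters contribute $(1+0)=1$ and can be discarded, so one writes $c(\rho)=\prod_{i=1}^{m}(1+v_i)$ with every $v_i\neq 0$. The crucial algebraic fact is that in the UFD $\mathbb{Z}/p[t_1,\dots,t_n]$, each $1+v$ with $v\in V_n\backslash\{0\}$ is a polynomial of total degree one, hence irreducible, and distinct $1+v$, $1+v'$ are non-associate, since units are nonzero scalars and comparing constant terms rules out any scalar multiple. Because $c(\rho)$ itself has constant term $1$, unique factorization then determines the multiset $\{v_1,\dots,v_m\}$ from $c(\rho)$ alone.

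Given this, $W$-invariance of $c(\rho)$ translates, via $w\cdot c(\rho)=\prod(1+wv_i)$, into $W$-invariance of the multiset $\{v_i\}$; equivalently, the multiplicity function $V_n\backslash\{0\}\to\mathbb{Z}_{\geq 0}$ sending $v$ to $\#\{i:v_i=v\}$ is constant on $W$-orbits. Transitivity of $W$ on $V_n\backslash\{0\}$ forces this function to take a single value $a\geq 0$, so
\[
c(\rho)=\prod_{v\in V_n\backslash\{0\}}(1+v)^{a}=c(reg)^{a}
\]
by Proposition~\ref{Proposition:r}. The only delicate point is the uniqueness step in the second paragraph: in characteristic $p$ one must verify that no coincidental factorization confuses the multiset of linear factors. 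Once that is pinned down, the remainder is routine orbit counting.
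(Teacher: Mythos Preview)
Your argument is correct and is essentially the paper's own proof: both write $c(\rho)$ as a product of linear factors $1+v$ and use $W$-invariance together with transitivity to force the multiplicity of each factor to be a single constant $a$, whence $c(\rho)=c(reg)^a$. The only difference is cosmetic: you phrase the key step via unique factorization in the UFD $\mathbb{Z}/p[t_1,\dots,t_n]$ to recover the multiset of weights, whereas the paper defines $\mu(v)$ directly as the maximal power of $1+v$ dividing $c(\rho)$ and argues by contradiction that $\mu$ is constant.
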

 
 \begin{proof}
 Suppose that 
 \[
 c(\rho)=\prod_{v \in V_n\backslash\{0\}} (1+v)^{\mu(v)}.
 \]
The non-negative integer $\mu(v)$ is the divisibility of $c(\rho)$ by $1+v$. In other words, 
$c(\rho)$ is divisible by $(1+v)^{\mu(v)}$ but not divisible by $(1+v)^{\mu(v)+1}$.
In order to prove the lemma, it suffices  to show that $\mu(v)$ is a constant function of $v \in V_n\backslash \{0\}$. 
 Suppose that $\mu(u) <\mu(v)$ for some $u, v \in V_n \backslash\{0\}$.
 Let $w \in W$ be an element such that $w v=u$ . Then, since $w$ acts trivially on $c(\rho)$, we have
 \[
c(\rho)=w  c(\rho) = \prod_{v' \in V_n \backslash\{0\}}  (w(1+v'))^{\mu(v')}=
\left(\prod_{v'  \in V_n\backslash\{0, v\}} (1+wv')^{\mu(v')}\right) (1+u)^{\mu(v)}.
\]
This implies that $\mu(u)\geq \mu(v)$. It is a contradiction. Hence, we have the desired result.
 \end{proof}
 
By Proposition~\ref{Proposition:r} and Lemma~\ref{Lemma:image}, we have the following result:

\begin{prop}
Let $G$ be a compact connected Lie group and let $A_n$ be an elementary abelian $p$-subgroup of $G$.
Suppose  that the Weyl group of $A_n$, that is the quotient of the normalizer of $A_n$ in $G$ by the centralizer of $A_n$ in $G$, acts transitively on $V_n\backslash\{0\}$.  Then, $B\eta^*(Ch_{H\mathbb{Z}/p}(G))\subset D_n$, where  $\eta:A_n \to G$ be the inclusion of $A_n$ into $G$. 
\end{prop}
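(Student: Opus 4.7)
The plan is to reduce the claim to Lemma~\ref{Lemma:image} applied to every single representation of $G$. Given a finite dimensional complex representation $\rho:G\to GL_m(\mathbb{C})$, we have $B\eta^*c_i(\rho)=c_i(\rho\circ\eta)$, and since $A_n$ is abelian the representation $\rho\circ\eta$ splits as a direct sum of characters, so that $c(\rho\circ\eta)$ is a product of factors $(1+v)$ with $v\in V_n$. In particular each $c_i(\rho\circ\eta)$ lies in the polynomial part $\mathbb{Z}/p[t_1,\dots,t_n]$, and because $D_n$ is a graded subring it will be enough to show that the total Chern class $c(\rho\circ\eta)$ lies in $D_n$; then, extracting homogeneous components, each $c_i(\rho\circ\eta)$ lies in $D_n$, and the image $B\eta^*(Ch_{H\mathbb{Z}/p}(G))$, being generated by such elements, lies in $D_n$ as well.

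The next step is to verify that $c(\rho\circ\eta)$ is invariant under the Weyl group $W=N_G(A_n)/Z_G(A_n)$. For $g\in N_G(A_n)$, write $\phi_g:A_n\to A_n$ for conjugation by $g$ and $\psi_{\rho(g)}$ for conjugation by $\rho(g)$ in $GL_m(\mathbb{C})$. The identity $\rho|_{A_n}\circ\phi_g=\psi_{\rho(g)}\circ\rho|_{A_n}$ together with the standard fact that inner automorphisms of a connected Lie group induce the identity on the cohomology of its classifying space gives $B\phi_g^*\circ B(\rho|_{A_n})^*=B(\rho|_{A_n})^*$. Consequently everything in the image of $B(\rho|_{A_n})^*$, and in particular $c(\rho\circ\eta)$, is fixed by $W$.

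Granted $W$-invariance and the transitivity hypothesis on $V_n\setminus\{0\}$, Lemma~\ref{Lemma:image} forces $c(\rho\circ\eta)=c(reg)^a$ for some $a\geq 0$, and Proposition~\ref{Proposition:r} identifies $c(reg)$ with $1-c_{n,n-1}+\cdots+(-1)^{n}c_{n,0}\in D_n$. Therefore $c(\rho\circ\eta)\in D_n$, and the argument of the first paragraph finishes the proof.

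The only non-mechanical ingredient is the $W$-invariance of $c(\rho\circ\eta)$; everything else is a direct assembly of Proposition~\ref{Proposition:r}, Lemma~\ref{Lemma:image}, and the grading on $D_n$. I expect the homotopical assertion that $B\psi_{\rho(g)}\simeq\mathrm{id}$ for $GL_m(\mathbb{C})$ connected to be the only place where some care is required, though it is standard.
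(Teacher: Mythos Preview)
Your argument is correct and is exactly the approach the paper intends: the paper merely states that the proposition follows ``by Proposition~\ref{Proposition:r} and Lemma~\ref{Lemma:image}'' without further detail, and you have supplied precisely those details. One small simplification: rather than passing through $GL_m(\mathbb{C})$, you can observe directly that the entire image of $B\eta^*$ lands in the $W$-invariants, since for $g\in N_G(A_n)$ one has $\eta\circ\phi_g=c_g\circ\eta$ with $c_g$ conjugation in the connected group $G$, hence $Bc_g\simeq\mathrm{id}$; but your version via $\psi_{\rho(g)}$ is equally valid.
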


We end this section by recalling the following fact:

\begin{prop}
The action of $SL_n(\mathbb{Z}/p)$ on $V_n\backslash\{0\}$ is transitive for $n\geq 2$.
\end{prop}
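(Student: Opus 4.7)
The plan is to show directly that any nonzero vector $v \in V_n \setminus \{0\}$ can be transported to a fixed reference vector, say $e_1 = (1, 0, \ldots, 0)$, by an element of $SL_n(\mathbb{Z}/p)$. Once this is established, transitivity follows immediately: given $u, v \in V_n \setminus \{0\}$, one chooses $A, C \in SL_n(\mathbb{Z}/p)$ with $Au = e_1 = Cv$, and then $C^{-1}A$ is the required element sending $u$ to $v$.

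The first step is linear-algebraic: since $v \neq 0$, extend $\{v\}$ to a $\mathbb{Z}/p$-basis $\{v, w_2, \ldots, w_n\}$ of $V_n$, and let $B \in GL_n(\mathbb{Z}/p)$ be the matrix whose columns are $v, w_2, \ldots, w_n$ in that order. Then $B e_1 = v$, but in general $\det B$ need not equal $1$, so $B$ sits in $GL_n$ rather than $SL_n$.

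The second step is to correct the determinant without disturbing the first column, and this is exactly where the hypothesis $n \geq 2$ enters: replace $w_n$ by $(\det B)^{-1} w_n$ to obtain a new matrix $B' \in GL_n(\mathbb{Z}/p)$ whose first column is still $v$ and whose determinant is $1$. Then $B' \in SL_n(\mathbb{Z}/p)$ and $(B')^{-1}$ sends $v$ to $e_1$, as desired.

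There is no real obstacle here; the only mild subtlety is the need for a second column to absorb the determinant, which is precisely why the statement fails in the trivial case $n = 1$ (where $SL_1(\mathbb{Z}/p)$ is trivial and the orbits on $V_1 \setminus \{0\}$ are singletons).
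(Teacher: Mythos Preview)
Your proof is correct and takes essentially the same approach as the paper: both arguments construct, for each nonzero $v$, a matrix in $SL_n(\mathbb{Z}/p)$ with first column $v$ by extending $v$ to a basis and then rescaling one of the remaining columns to force determinant $1$. The paper does this a bit more concretely (permuting so that $a_1\neq 0$ and writing down an explicit near-diagonal matrix), while you phrase it abstractly via basis extension, but the idea is identical.
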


\begin{proof}
It is an easy exercise of linear algebra.  It suffices to show that for any $a=(a_1, a_2, \dots, a_n) \in V_n\backslash \{0\}$, there exists a matrix $g$ in $SL_{n}(\mathbb{Z}/p)$ such that
\[
g \left( \begin{array}{c} 1 \\ 0 \\ \vdots \\ 0 \end{array}\right)={}^t a, \]
where ${}^t a$ is the transpose of $a$.
If necessary, applying a permutation, without loss of generality, we may assume that $a_1 \not =0$. We choose the first column vector of $g$ to be ${}^t a$ and the first  row vector of $g$ to be $(a_1, 0, \dots, 0)$ and we choose the rest of the entries in the matrix $g$ so that the matrix obtained from $g$ by removing the first column and the first row is a diagonal matrix whose $(i,i)$-entry is $1$ for $i=1, \dots, n-2$ and $(n-1, n-1)$-entry is $a_{1}^{-1}$.
Then by computing the cofactor expansion along the first row, we see that the determinant of $g$ is $1$ and so  $g$ is in $SL_n(\mathbb{Z}/p)$. By definition, it is clear that $g$ satisfies the required equality.
\end{proof}

Thus, in order to prove Theorem~\ref{Theorem:main}, it suffices to show that there exists an elementary abelian $p$-subgroup $A_n$ whose Weyl group is $SL_n(\mathbb{Z}/p)$ and that $B\eta^*(x)\not\in D_n$. This is what we do in the next section.


\section{Chern subrings}

In this section, we prove Theorem~\ref{Theorem:main} by observing the cohomology of non-toral elementary abelian $p$-subgroup of $G$.
There exist non-toral elementary abelian $p$-subgroups in a compact connected Lie group if the integral homology of the  Lie group has $p$-torsion. These non-toral elementary abelian $p$-subgroups and their Weyl groups are known for $(p, G)=(p, PU(n)), (3, F_4), (3, E_6), (3, E_7), (3, E_8), (5, E_8)$. We refer the reader to Andersen et al. \cite{A-G-M-V} and its references.
 In this paper, we use the following results for $(p, G)=(p, PU(p)), (3, F_4)$ and $(5, E_8)$ only:

\begin{prop} There hold the following: \newline
{\rm (1)} For $(p, G)=(p, PU(p))$, there exists a non-toral elementary abelian $p$-subgroup $A_2$ of rank $2$ such that its Weyl group in $G$ is the special linear group $SL_2(\mathbb{Z}/p)$. \\
{\rm (2)} For $(p, G)=(3, F_4), (5, E_8)$, there exists a non-toral elementary $p$-subgroup $A_3$ of rank $3$ such that its Weyl group in $G$ is the special linear group $SL_3(\mathbb{Z}/p)$.
\end{prop}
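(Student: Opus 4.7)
The plan is to construct each subgroup $A_n \subset G$ explicitly and then to compute its Weyl group by pinching: an upper bound from preservation of an invariant form, and a lower bound by exhibiting enough normalizing elements.

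For part (1) I would use the standard Heisenberg construction in $PU(p)$. Let $\zeta = e^{2\pi i/p}$, let $D = \mathrm{diag}(1, \zeta, \zeta^2, \ldots, \zeta^{p-1})$ and let $P$ be the cyclic permutation matrix $e_i \mapsto e_{i+1 \bmod p}$; both lie in $SU(p)$ and satisfy $DP = \zeta PD$. Hence their images $\bar D, \bar P$ in $PU(p)$ commute and generate an $A_2 \cong (\mathbb{Z}/p)^2$, which is non-toral because its preimage in $SU(p)$ is the non-abelian Heisenberg group of order $p^3$. Taking commutators of lifts to $SU(p)$ yields a non-degenerate, $W$-invariant alternating pairing $A_2 \times A_2 \to Z(SU(p)) \cong \mathbb{F}_p$, so $W(A_2) \subseteq \mathrm{Sp}_2(\mathbb{F}_p) = SL_2(\mathbb{F}_p)$. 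For equality, I would exhibit two normalizing matrices---conjugation by the unitary discrete Fourier transform $F_{ij} = p^{-1/2}\zeta^{ij}$, which interchanges $D$ and $P$ up to scalar, and conjugation by a diagonal ``quadratic phase'' matrix of the form $\mathrm{diag}(\zeta^{i(i-1)/2})$, which fixes $\bar D$ and shears $\bar P$ to $\bar D \bar P$---and note that together these realize the standard generators of $SL_2(\mathbb{F}_p)$.

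For part (2) the argument is analogous in spirit but uses the exceptional structure of $F_4$ and $E_8$. For $(3, F_4)$ one locates $A_3$ inside a finite subgroup of $F_4$ of shape $3^3 \rtimes SL_3(\mathbb{F}_3)$ arising from the action on the Albert algebra; for $(5, E_8)$ one uses an analogous subgroup of shape $5^3 \rtimes SL_3(\mathbb{F}_5)$ inside $E_8$. In both cases $A_3$ is non-toral because its lifts to the simply connected cover generate a non-abelian extra-special $p$-group, and the containment $W(A_3) \supseteq SL_3(\mathbb{F}_p)$ is then visible from the structure of the ambient finite subgroup.

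The main obstacle is the reverse inclusion $W(A_3) \subseteq SL_3(\mathbb{F}_p)$ in the exceptional cases. Unlike the $PU(p)$ situation, a simple alternating commutator pairing is not enough, since any alternating form on $\mathbb{F}_p^3$ is degenerate and its isometry group is a proper parabolic rather than $SL_3$. One must therefore invoke a more subtle invariant (arising from the $G$-action on its Lie algebra) to cut $GL_3(\mathbb{F}_p)$ down to $SL_3(\mathbb{F}_p)$. In practice, rather than grinding this out inside $F_4$ and $E_8$, I would quote the classification of non-toral elementary abelian $p$-subgroups of exceptional Lie groups due to Griess and to Andersen--Grodal--M\o{}ller--Viruel, from which the Weyl group is read off directly; this is the route the paper takes.
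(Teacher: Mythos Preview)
The paper gives no proof of this proposition at all: it simply states the result and refers the reader to Andersen--Grodal--M{\o}ller--Viruel \cite{A-G-M-V} and its references for both parts. So your final sentence is exactly right, and your decision to quote the classification for the exceptional cases coincides with the paper's treatment.

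Your explicit argument for $PU(p)$ is correct and goes well beyond what the paper offers. The Heisenberg pair $\bar D,\bar P$, the symplectic commutator pairing giving $W\subseteq \mathrm{Sp}_2(\mathbb{F}_p)=SL_2(\mathbb{F}_p)$, and the Fourier and quadratic-phase matrices realizing the standard generators of $SL_2(\mathbb{F}_p)$ are all standard and sound.

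One genuine slip in your sketch for part~(2): $F_4$ and $E_8$ are already simply connected, so there is no nontrivial cover to lift to, and the sentence ``its lifts to the simply connected cover generate a non-abelian extra-special $p$-group'' does not parse in these cases. Non-toralness of $A_3$ here is established by other means---for instance, by checking that its centralizer in $G$ is finite (indeed equal to $A_3$), which is incompatible with $A_3$ sitting inside a maximal torus. Since you ultimately defer to the Griess/AGMV classification anyway, this does not affect your conclusion, but the stated reason should be corrected.
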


Let $\eta:A_n \to G$ be the inclusion of non-toral elementary abelian $p$-subgroup in $G$. In  \cite{Ka-Ya}, we computed  the image of  the induced homomorphism 
\[
B\eta^*:H^{*}(BG;\mathbb{Z}/p) \to SM_n
\]
 for $(p,G)=(p,PU(p))$, $n=2$ and for $(p,G)= (3, F_4)$, $(3, E_6)$, $(3, E_7)$, $(5,E_8)$, $n=3$.
Since we wish to include the case $(p,G)=(3, E_8)$ in Theorem~\ref{Theorem:main}, instead of making use of the computation of the image of  $B\eta^*$, we use the following result, which is also used in the computation of the image of $B\eta^*$:

\begin{prop} \label{iso}
There hold the following:\\
{\rm (1)} The induced homomorphism
$$
H^{2}(BPU(p);\mathbb{Z}/p)\to SM_2^2=\mathbb{Z}/p\{ dt_1dt_2\}
$$
is an isomorphism.\\
{\rm (2)} For $(p,G)=(3, F_4)$ and $(5, E_8)$, the induced homomorphism
$$
H^{4}(BG;\mathbb{Z}/p) \to SM_3^4=\mathbb{Z}/p\{ Q_0 (dt_1dt_2dt_3)\}
$$
is an isomorphism.
\end{prop}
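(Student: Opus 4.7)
The plan is to first observe that in both parts the source and target are each one-dimensional over $\mathbb{F}_p$, so that it suffices to show the map is nonzero. The target dimensions follow by a degree count on the basis of $SM_n$ from Proposition~\ref{Proposition:e}(6): in $SM_2$, the only basis element of degree $2$ is $dt_1 dt_2$, since the Milnor-operated elements $Q_0(dt_1 dt_2)$, $Q_1(dt_1 dt_2)$ have degrees $3$ and $2p+1$ and $SD_2$ is generated in degrees $\ge 2p+2$; in $SM_3$, the only basis element of degree $4$ is $Q_0(dt_1 dt_2 dt_3)$, since $dt_1 dt_2 dt_3$ has degree $3$, every other iterated Milnor-operation basis element has degree $\ge 2p+2$, and $SD_3$ first appears in degree $2p^2+2p+2$ (the degree of $e_3$). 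On the source side, $H^2(BPU(p);\mathbb{Z}/p) \cong \mathbb{Z}/p$ follows from the Serre spectral sequence of the central extension $B\mathbb{Z}/p \to BSU(p) \to BPU(p)$, while $H^4(BF_4;\mathbb{Z}/3) \cong \mathbb{Z}/3$ and $H^4(BE_8;\mathbb{Z}/5) \cong \mathbb{Z}/5$ are classical.

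For part (1) I would identify $x_2$ with the class in $H^2(BPU(p);\mathbb{Z}/p)$ classifying the central extension $1 \to \mathbb{Z}/p \to SU(p) \to PU(p) \to 1$. The non-toral $A_2 \subset PU(p)$ is the image of the Heisenberg subgroup $\langle \sigma, \tau \rangle \subset SU(p)$, where, for $\zeta$ a primitive $p$-th root of unity, $\sigma = \mathrm{diag}(1, \zeta, \ldots, \zeta^{p-1})$ and $\tau$ is the cyclic permutation matrix, so that $[\sigma, \tau] = \zeta I$. The pullback of the above extension along $\eta: A_2 \hookrightarrow PU(p)$ is therefore the mod-$p$ Heisenberg extension of $A_2 \cong (\mathbb{Z}/p)^2$ by $\mathbb{Z}/p$, whose classifying class in $H^2(BA_2;\mathbb{Z}/p)$ is, up to a unit, the symplectic form $dt_1 \cdot dt_2$ dual to the commutator pairing. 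Hence $B\eta^*(x_2) \ne 0$.

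For part (2) I would use the explicit description of the non-toral $A_3 \subset G$: it contains a rank-one central subgroup $\langle z \rangle \cong \mathbb{Z}/p$ whose centralizer $K = C_G(z)$ contains a $PU(p)$-direct factor in which the remaining $A_2 \subset A_3$ sits as a non-toral $A_2$ of type~(1). Factoring $A_3 \hookrightarrow K \hookrightarrow G$, the restriction of $x_4$ to $K$ decomposes under the K\"unneth-type structure on $H^*(BK;\mathbb{Z}/p)$ as a product of a degree-$2$ class on the $PU(p)$-factor (which by part~(1) restricts to $dt_i dt_j$) with a degree-$2$ polynomial class on $B\langle z \rangle$ (restricting to $t_k$); the resulting term $t_k \cdot dt_i dt_j$ is, after $SL_3(\mathbb{F}_p)$-averaging, a nonzero multiple of $Q_0(dt_1 dt_2 dt_3)$. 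Alternatively, one computes $c_2(\alpha|_{A_3})$ for the representation $\alpha$ of Proposition~\ref{Proposition:chern} with $p x_4 = c_2(\alpha)$ and extracts $B\eta^*(x_4)$ by division by $p$ in an integral lift. The main obstacle is making these descriptions concrete for $F_4$ and $E_8$; this relies on the exceptional Jordan-algebra realization of $F_4$ (and the analogous model for $E_8$ at $p=5$) to pin down both the embedding of the non-toral $A_3$ and, in the second approach, the $A_3$-character decomposition of $\alpha$.
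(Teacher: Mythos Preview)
The paper does not supply its own proof of this proposition; it is quoted as input from the authors' earlier work \cite{Ka-Ya}, where the image of $B\eta^*$ was computed. So there is no in-paper argument to compare against, and I assess your proposal on its own merits.

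Your reduction to showing nontriviality of a map between one-dimensional $\mathbb{Z}/p$-spaces, together with the degree count identifying $SM_2^2$ and $SM_3^4$ via Proposition~\ref{Proposition:e}(6), is correct. Your argument for part~(1), identifying $x_2$ with the extension class of $1\to\mathbb{Z}/p\to SU(p)\to PU(p)\to 1$ and computing its pullback as the commutator form of the Heisenberg extension, is the standard one and is essentially complete.

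For part~(2), however, both sketches have genuine gaps beyond the ``making it concrete'' that you flag. Your alternative~(b) cannot work at all: since $\widetilde H^{*}(BA_3;\mathbb{Z})$ is entirely $p$-torsion, the integral restriction of $c_2(\alpha)=\gamma p\,x_4$ to $BA_3$ equals $\gamma p\cdot B\eta^*(x_4)=0$, and there is no ambient torsion-free group in which a ``division by $p$'' could recover $B\eta^*(x_4)$. (Equivalently, in mod~$p$ cohomology Lemma~\ref{Lemma:image} forces $c(\alpha|_{A_3})\in D_3$, and $D_3$ is trivial in degree~$4$.) Your approach~(a) has the right shape, but the structural assertion is not accurate as stated: for $(3,F_4)$ the centralizer $C_G(z)$ of a nontrivial $z\in A_3$ has local type $(SU(3)\times SU(3))/(\mathbb{Z}/3)$, which does not contain $PU(3)$ as a direct factor---only the \emph{quotient} $C_G(z)/\langle z\rangle$ is $PU(3)\times PU(3)$---and the residual $A_2$ cannot lie in a single $PU(3)$ factor (else $A_3$ would lift to an elementary abelian, hence toral, subgroup of $SU(3)$). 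Turning this outline into a proof requires tracking $x_4$ through $H^4(BC_G(z);\mathbb{Z}/p)$ with considerably more care than a K\"unneth splitting provides.
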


Now, we prove Theorem~\ref{Theorem:main}  for $(p,G)=(3, E_8)$.  As we mentioned at the end of the previous section, 
it suffices to show that  $B\eta^*(x) \not \in D_3$.
There is a sequence of inclusions 
\[
F_4 \to E_6 \to E_7 \to E_8
\]
and the induced homomorphisms 
\[
H^{4}(BF_4;\mathbb{Z}/p) \leftarrow 
H^{4}(BE_6;\mathbb{Z}/p) \leftarrow 
H^{4}(BE_7;\mathbb{Z}/p) \leftarrow 
H^{4}(BE_8;\mathbb{Z}/p) =\mathbb{Z}/p
\]
are isomorphisms.
Recall that we denote the generator of $H^{4}(BE_8;\mathbb{Z}/3)$ by $x_4$. We define $x\in H^{26}(BE_8;\mathbb{Z}/3)$ by $x=Q_1Q_2(x_4)$.
Since the induced homomorphism maps $x_4$ to $Q_0 (dt_1dt_2dt_3)$ by Proposition~\ref{iso}, 
it maps $x$ to $e_3=Q_0Q_1Q_2(dt_1dt_2dt_3)$ in $SD_3$.
It is clear that $e_3^{a}$ is not in $D_3$ unless $a$ is divisible by $p-1$. Thus, we have Theorem~\ref{Theorem:main}  for $(p,G)=(3, E_8)$. 
Theorem~\ref{Theorem:main}  for the other $(p,G)$'s can be proved in the same manner.


\section{Proof of Proposition~\ref{Proposition:chern}}

In this section, we prove Proposition~\ref{Proposition:chern} by computing the second Chern class of the adjoint representation of the exceptional Lie group $\alpha:E_8\to SO(248)$. 
Similar computation was done in \cite{Sc-Ya} for the irreducible representation $F_4 \to SO(26)$.

Since the induced homomorphism 
\[
H^4 (BF_4;\mathbb{Z}_{(3)}) \leftarrow H^4 (BE_6;\mathbb{Z}_{(3)}) \leftarrow H^4 (BE_7;\mathbb{Z}_{(3)}) \leftarrow H^4 (BE_8;\mathbb{Z}_{(3)})=\mathbb{Z}_{(3)}
\]
are isomorphisms, if $3 x_4$ in $H^{4}(BE_8;\mathbb{Z}_{(3)})$ is a Chern class, so is in $H^{4}(BG;\mathbb{Z}_{(3)})$
for $G=F_4, E_6, E_7$.  So, it suffices to show the proposition for $G=E_8$.

Let $\alpha:E_8 \to SO(248)$ be the adjoint representation of $E_8$. 
By the construction of the exceptional Lie group $E_8$ in \cite{Ad}, there exists a homomorphism $\beta:\mathrm{Spin}(16)\to E_8$ such that the induced representation  $\alpha\circ \beta $ is 
the direct sum of $\lambda^2_{16}:\mathrm{Spin}(16) \to SO(120)$ and $\Delta^{+}_{16}:\mathrm{Spin}(16)\to SO(128)$.  See \cite[Corollary 7.3]{Ad} and \cite[p. 143]{Mi-Ni}.
Let $T^8$ be the maximal torus of $\mathrm{Spin}(16)$. 
Let $T^1$ be the first factor of $T^8$ and 
$\eta:T^1\to \mathrm{Spin}(16)$ the inclusion of $T^1$ into 
$\mathrm{Spin}(16)$. 
Denote by $R(G)$ the complex representation ring of $G$.
The complexification of  $\lambda^2_{16}$ corresponds to the second elementary symmetric function of $z_1^2+z_1^{-2}, \dots, z_{8}^2+z_{8}^{-2}$ in $R(T^8)$ and the complexification of $\displaystyle \Delta_{16}^{+}$ corresponds to $\displaystyle \sum_{\varepsilon_1\cdots \varepsilon_8=1} z_1^{\varepsilon_1}\dots z_{8}^{\varepsilon_8}$ in $R(T^8)$, where  $\varepsilon_r =\pm 1$ for $r=1,\dots, 8$. 

So, the restriction of the complexification of $\lambda^2_{16}$ to $T^1$  corresponds to  $$2^{2}\left( \begin{array}{c} 7 \\ 2 \end{array}
\right)+2\left( \begin{array}{c} 7 \\ 1 \end{array}
\right) (z_1^{2}+z_{1}^{-2})=84+14(z_1^{2}+z_{1}^{-2}) \quad \text{in $R(T^1)$}.$$
The restriction of the complexification of $\Delta_{16}^{+}$ to $T^1$ corresponds to  $$2^6 (z_1+z_1^{-1})=64(z_1+z_1^{-1})\quad \text{in $R(T^1)$.}$$
Therefore, the total Chern class of the complexification of $\alpha \circ \beta \circ\eta$ is $$\{(1+2u)(1-2u)\}^{14}\{(1+u)(1-u)\}^{64}=1-120u^2+\cdots \in \mathbb{Z}[u]=H^*(BT^1;\mathbb{Z}), $$
where $u$ is the generator of $H^{2}(BT^1;\mathbb{Z})=\mathbb{Z}$.
Since $120=2^3 \cdot 3 \cdot 5$,  the Chern class $c_2(\alpha)$ represents $\gamma px_4$ for $p=3, 5$ in $H^{4}(BE_8;\mathbb{Z}_{(p)})$, where $\gamma$ is a unit in $\mathbb{Z}_{(p)}$ and $x_4$ is the generator of $H^{4}(BE_8;\mathbb{Z}_{(p)})=\mathbb{Z}_{(p)}$.
This completes the proof of Proposition~\ref{Proposition:chern}.




\begin{thebibliography}{9}

\bibitem[Ad]{Ad}
J. F. Adams, {\it Lectures on exceptional Lie groups}, Univ. Chicago Press, Chicago, IL, 1996. 

\bibitem [A-G-M-V]{A-G-M-V} 
K. K. S. Andersen\ et al., The classification of $p$-compact groups for $p$ odd, Ann. of Math. (2) {\bf 167} (2008), no.~1, 95--210. 

\bibitem [Ka-Mi]{Ka-Mi}
M. Kameko\ and\ M. Mimura.
M\`ui invariants and Milnor operations,
Geometry and Topology Monographs {\bf 11},  (2007), 107-140.

\bibitem [Ka-Ya]{Ka-Ya}
M. Kameko\ and\ N. Yagita, The Brown-Peterson cohomology of the classifying spaces of the projective unitary groups ${\rm PU}(p)$ and exceptional Lie groups, Trans. Amer. Math. Soc. {\bf 360} (2008), no.~5, 2265--2284. 

\bibitem[Ko-Ya]{Ko-Ya}  
A. Kono\ and\ N. Yagita, Brown-Peterson and ordinary cohomology theories of classifying spaces for compact Lie groups, Trans. Amer. Math. Soc. {\bf 339} (1993), no.~2, 781--798. 

\bibitem[Mi-Ni]{Mi-Ni}
M. Mimura\ and\ T. Nishimoto.
On the Stiefel-Whitney classes of the representations associated with $\mathrm{Spin}(15)$,
Geometry and Topology Monographs {\bf 11},  (2007), 141-176.

\bibitem[Sc-Ya]{Sc-Ya}
B. Schuster\ and\ N. Yagita, Transfers of Chern classes in BP-cohomology and Chow rings, Trans. Amer. Math. Soc. {\bf 353} (2001), no.~3, 1039--1054 (electronic).

 \bibitem[Ta]{Ta}
E. Targa, Chern classes are not enough. Appendix to: ``On the cohomology and the Chow ring of the classifying space of ${\rm PGL}\sb p$''  by A. Vistoli, J. Reine Angew. Math. {\bf 610} (2007), 229--233.

\bibitem[To]{To} 
B. Totaro, The Chow ring of a classifying space, in {\it Algebraic $K$-theory (Seattle, WA, 1997)}, 249--281, Proc. Sympos. Pure Math., 67, Amer. Math. Soc., Providence, RI. 

\bibitem[Vi]{Vi}
A. Vistoli, On the cohomology and the Chow ring of the classifying space of ${\rm PGL}\sb p$, J. Reine Angew. Math. {\bf 610} (2007), 181--227.

\bibitem[Ya]{Ya}
N. Yagita, Applications of Atiyah-Hirzebruch spectral sequences for motivic cobordism, Proc. London Math. Soc. (3) {\bf 90} (2005), no.~3, 783--816. 


\end{thebibliography}
\end{document}